\theoremstyle{plain}
\newtheorem{theorem}{Theorem}[section]
\newtheorem{prop}[theorem]{Proposition}
\newtheorem{lemma}[theorem]{Lemma}
\theoremstyle{definition}
\newtheorem*{definition*}{Definition}
\newtheorem{question*}{Question}
\newtheorem*{example*}{Example}
\theoremstyle{remark}
\newtheorem*{remark*}{Remark}
\numberwithin{equation}{section}
\newcommand{\field}[1]{\mathbbm{#1}}
\newcommand{\N}{\field{N}}
\newcommand{\A}{\field{A}}
\newcommand{\ideal}[1]{\mathfrak{#1}}
\newcommand{\m}{\ideal{m}}
\newcommand{\p}{\ideal{p}}
\newcommand{\ia}{\ideal{a}}
\newcommand{\ib}{\ideal{b}}
\newcommand{\func}[1]{\mathrm{#1} \,}
\newcommand{\rank}{\func{rank}}
\newcommand{\hgt}{\func{ht}}
\newcommand{\ra}{\rightarrow}
\newcommand{\FF}{\mathcal{F}}
\newcommand{\RR}{\mathcal{R}}
\newcommand{\BB}{\mathcal{B}}
\newcommand{\MM}{\mathcal{M}}
\newcommand{\vv}{\mathbf{v}}
\newcommand{\uu}{\mathbf{u}}
\newcommand{\ee}{\mathbf{e}}
\author{Joseph P. Brennan}
  \address{Department of Mathematics \\ University of Central Florida \\4000 Central Florida Blvd., Orlando, FL  32816}
  \email{jpbrenna@mail.ucf.edu}
\author{Neil Epstein}
  \address{Department of Mathematics \\ University of Michigan \\ 530 Church St., Ann Arbor, MI  48109}
  \curraddr{Universit\"at Osnabr\"uck \\ Institut f\"ur Mathematik \\ 49069 Osnabr\"uck \\ Germany}
  \email{nepstein@uos.de}
\title[Noether normalization and matroids]{Noether normalizations, reductions of ideals, and matroids}
\subjclass[2000]{Primary 13A30; Secondary 05B35, 13B21, 13H15}
\date{\today}
\begin{document}
\begin{abstract}
We show that given a finitely generated standard-graded algebra of dimension $d$ over an infinite field, its graded Noether normalizations obey a certain kind of `generic exchange', allowing one to pass between any two of them in at most $d$ steps.  We prove analogous generic exchange theorems for minimal reductions of an ideal, minimal complete reductions of a set of ideals, and minimal complete reductions of multigraded $k$-algebras.  Finally, we unify all these results into a common axiomatic framework by introducing a new topological-combinatorial structure we call a \emph{generic matroid}, which is a common generalization of a topological space and a matroid.
\end{abstract}

\maketitle

\section{Introduction}
Matroids are a basic combinatorial construction that unites aspects of graph theory, linear algebra over finite fields, and other notions.  By definition, a matroid (with finite basis condition) consists of a ground set $E$ along with a nonempty collection $\BB$ of finite subsets of $E$ (the \emph{bases} of the matroid) such that no element of $\BB$ contains another, and such that for all $B, B' \in \BB$ and $b \in B$, there exists $b' \in B'$ such that $(B \setminus \{b\}) \cup \{b'\} \in \BB$.

Examples of matroids include: field extensions of finite transcendence degree (where the larger field is the ground set), along with their transcendence bases; finite-dimensional vector spaces over a field along with their vector-space bases; the set of edges of a graph with finite diameter along with its spanning forests; simplicial complexes with a particularly strong purity condition.  There are many books on matroids (e.g. \cite{Kung-mat, Oxley-mat, White-tmat}).  Applications of matroid theory to commutative algebra usually use matroids to \emph{create} algebraic structures (e.g. \cite{HerHi-pmat, Sim-clean, Wag-99}).  However, matroids may also be found \emph{within} familiar structures of commutative algebra (c.f. the second named author's \cite{nme-sp, nme*spread}), even when the authors do not explicitly note them (c.f. the obstruction to normality of graph rings \cite{OH} and \cite{SVV} in light of \cite{bicyclic}, or the compatibility property for primary decomposition in \cite{Yao-pdec}).

One of the basic notions of commutative algebra is that of the closure operation on ideals.  Given a ring $R$, a closure operation on $R$-ideals consists of a unary operator $(-)^{c}$ on the set of ideals of $R$ such that for all ideals $J \subseteq I$, we have $J \subseteq (J^{c})^{c} = J^c \subseteq I^c$.  Standard examples include radical, integral closure, and (for rings of prime characteristic) tight closure.  A \emph{c-reduction} \cite{nme*spread} of an ideal $I$ is an ideal $J$ such that $J \subseteq I \subseteq J^c$.  If $J$ is minimal with respect to being a $c$-reduction of $I$, we call it a \emph{minimal c-reduction} of $I$.  If $c$ is integral closure, we drop the closure symbol and use the terms \emph{reduction} and \emph{minimal reduction}, following the terminology in \cite{NR}.

Northcott and Rees \cite{NR} showed that for an ideal $I$ in a Noetherian local ring $R$, $I$ has minimal reductions, and that if $R$ has infinite residue field then the minimal reductions of $I$ all have the same cardinality.  On the other hand, one of the characteristic qualities of a matroid is that all bases have the same cardinality (the \emph{rank} of the matroid).
Accordingly a natural question to ask is whether the minimal generating sets of the minimal reductions of  such an ideal $I$ form the bases of a matroid.
The following counterexample shows that the answer is ``no'', even in the case where $I=\m$ and $R$ is a quadric hypersurface:

\begin{example*}
Let $R=k[\![x,y,z,w]\!]/(xy-zw)$, and let $I=\m = (x,y,z,w)$.  Consider the sets $A := \{x+y,z,w\}$ and $B := \{x, y, z+w\}$.  The ideals $\ia := (A)$ and $\ib := (B)$ are both minimal reductions of $\m$.  However, if one omits the element $x+y$ from $A$ and tries to replace it with an element from $B$, one notices that none of the sets $\{x,z,w\}$, $\{y, z, w\}$, $\{z+w, z,w\}$ generate reductions of $\m$.  Indeed, all generate prime ideals which are properly contained in $\m$.
\end{example*}

However, the answer is ``yes"  \cite{nme-sp, nme*spread} if one replaces ``minimal reduction" with ``minimal $c$-reduction", where $c$ is any of: Frobenius closure, tight closure, or plus closure, under mild conditions on a ring of prime characteristic.

In this note, we provide a more general, topological-combinatorial structure (which we call a \emph{generic matroid}) that includes all matroids as a special case, but also explains minimal reductions of ideals, graded Noether normalizations of standard graded algebras, and certain ``complete reductions'' (c.f. Rees \cite{Rees-mix} or O'Carroll \cite{Oca-87}).  The idea (in the case of minimal reductions of an ideal, say) is that given two minimal reductions, if one removes one of the generators from the first reduction, ``almost all'' choices of a minimal generator from the second reduction will work to complement the generating set to give a third minimal reduction.

After first providing the aforementioned examples (in Theorems~\ref{thm:algebras}, \ref{thm:reductions}, \ref{thm:mulalgebras}, \ref{thm:mulalgebras2}, and \ref{thm:completereds}) of matroid-like exchange, we give the definition of a generic matroid in Section~\ref{sec:gmat}, capturing all the examples into a single axiomatic framework. For general background on integral closure, minimal reductions, and related matters, we recommend the book \cite{HuSw-book}.

\section{Exchange in graded Noether normalizations}

Our first example of ``generic exchange'' happens in the context of standard-graded $k$-algebras, where $k$ is an infinite field.  To fix notation and terms, we provide the following very familiar definition:
\begin{definition*}
Let $k$ be a field.  A $k$-algebra $S$ is called \emph{standard graded algebra} if it satisfies the following conditions: \begin{itemize}
\item It is $\N$-graded and finitely generated as a $k$-algebra,
\item $S_0=k$, and
\item $S=k[S_1]$.
\end{itemize}
A ring homomorphism $g: R \ra S$ between $\N$-graded algebras is called \emph{graded} if $g(R_n) \subseteq S_n$ for all $n \in \N$.

If $R$ is a standard graded algebra over $k$ of dimension $d$, then a \emph{(graded) Noether normalization} of $R$ is an injective (graded) ring homomorphism $g: A \hookrightarrow R$, such that $A$ is a polynomial ring in $d$ variables over $k$ (which is standard graded over $k$ in such a way that all variables are homogeneous of degree 1), such that $R$ is finitely generated as an $A$-module.
\end{definition*}

Recall the following theorem on graded Noether normalization:

\begin{prop}\cite[Theorem 1.5.17]{BH}\label{prop:NN}
Let $k$ be a field and $R$ a positively graded affine $k$-algebra.  Set $n = \dim R$. \begin{enumerate}
\item[\textrm{(a)}] The following are equivalent for homogeneous elements $x_1, \dotsc, x_n$:
 \begin{enumerate}
  \item[\textit{(i)}] $x_1, \dotsc, x_n$ is a homogeneous system of parameters;
  \item[\textrm{(ii)}] $R$ is an integral extension of $k[x_1, \dotsc, x_n]$;
  \item[\textrm{(iii)}] $R$ is a finite $k[x_1, \dotsc, x_n]$-module.
 \end{enumerate}
\item[\textrm{(b)}] There exist homogeneous elements $x_1, \dotsc, x_n$ satisfying the conditions in \textrm{(a)}.  Moreover, such elements are algebraically independent over $k$.
\item[\textrm{(c)}]  If $R$ is a standard graded algebra over $k$ and $k$ is infinite, then such $x_1, \dotsc, x_n$ can be chosen to be of degree 1.
\end{enumerate}
\end{prop}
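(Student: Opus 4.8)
The statement is the classical graded Noether normalization theorem, and I would prove parts (a), (b), (c) in that order, with the graded Nakayama lemma and homogeneous prime avoidance doing the real work. For part (a), write $A=k[x_1,\dots,x_n]$. The implications (ii)$\Leftrightarrow$(iii) are immediate, since $R$ is a finitely generated $k$-algebra, hence a finitely generated $A$-algebra, and an integral extension of algebra-finite type is module-finite. For (iii)$\Rightarrow$(i): if $R$ is a finite $A$-module then $n=\dim R=\dim A$, so the $x_i$ are algebraically independent, and $R/(x_1,\dots,x_n)R$ is a finite module over $A/(x_1,\dots,x_n)A=k$; thus $R/(x_1,\dots,x_n)R$ is zero-dimensional, so the $x_i$ form a homogeneous system of parameters. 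For (i)$\Rightarrow$(iii): if the $x_i$ are an h.s.o.p.\ then $R/(x_1,\dots,x_n)R$ is a zero-dimensional finitely generated $k$-algebra, hence finite-dimensional over $k$; lifting a homogeneous $k$-basis to homogeneous elements $e_1,\dots,e_m\in R$ and applying the graded Nakayama lemma to the finitely generated graded $A$-module $R/(Ae_1+\dots+Ae_m)$ shows $R=\sum_i Ae_i$, so $R$ is module-finite over $A$.

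For part (b) I would induct on $n=\dim R$, the case $n=0$ being vacuous. For $n\ge 1$, let $\p_1,\dots,\p_r$ be the minimal primes $\p$ of $R$ with $\dim R/\p=n$. Each $\p_i$ is a graded prime with $\p_i\not\supseteq R_+$, since otherwise $R/\p_i$ would be zero-dimensional. By homogeneous prime avoidance there is a homogeneous element $y\in R_+\setminus\bigcup_i\p_i$. If $\p$ is a minimal prime over $(y)$, it contains some minimal prime $\q$ of $R$; if $\dim R/\q=n$ then $y\in\p\setminus\q$ forces $\p\supsetneq\q$, and if $\dim R/\q<n$ this is automatic, so in either case $\dim R/\p\le n-1$. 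Hence $\dim R/(y)\le n-1$, while Krull's principal ideal theorem gives $\dim R/(y)\ge n-1$, so $\dim R/(y)=n-1$. Applying the inductive hypothesis to $R/(y)$ and lifting the resulting homogeneous parameters, together with $y$ they form an h.s.o.p.\ of $R$; algebraic independence is then part of (a).

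For part (c) I would rerun the induction of (b) but choose $y$ of degree $1$ at every stage. Here the extra input is that a vector space over an infinite field is not a finite union of proper subspaces: for $n\ge 1$ each $\p_i\cap R_1$ is a proper $k$-subspace of $R_1$ (if $R_1\subseteq\p_i$ then $R=k[R_1]\subseteq\p_i$ and $R/\p_i$ is zero-dimensional), so $\bigcup_i(\p_i\cap R_1)\ne R_1$ and a degree-one $y\notin\bigcup_i\p_i$ exists. One must also check that $R/(y)$ is again standard graded over $k$ --- it is $\N$-graded, finitely generated, with degree-zero part still $k$ and, since $R=k[R_1]$, generated in degree $1$ --- so the induction is legitimate and produces $n-1$ degree-one parameters of $R/(y)$; pulling these back and adjoining $y$ finishes the proof.

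The one place deserving care is the dimension bookkeeping in (b): one must ensure that passing from $R$ to $R/(y)$ lowers the dimension by exactly one and introduces no new top-dimensional component, which is exactly why $y$ is taken outside all top-dimensional minimal primes rather than merely outside all minimal primes. The graded versions of Nakayama's lemma and of prime avoidance I would cite rather than reprove; with those in hand the argument is routine.
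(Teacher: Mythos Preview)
The paper does not prove this proposition at all: it is stated as a recalled result and attributed to \cite[Theorem~1.5.17]{BH}, with no accompanying argument. So there is no ``paper's own proof'' to compare against; your write-up is essentially the standard proof one finds in Bruns--Herzog.

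There is, however, one genuine slip in your argument for (i)$\Rightarrow$(iii). You write that you apply the graded Nakayama lemma to ``the finitely generated graded $A$-module $R/(Ae_1+\cdots+Ae_m)$.'' But the finite generation of $R$ as an $A$-module is exactly what you are trying to establish, so you cannot assume that this quotient is a finitely generated $A$-module. The fix is to invoke the version of graded Nakayama that requires only that the module be bounded below in degree (which $R$, and hence any graded quotient of $R$, certainly is): if $M$ is a graded $A$-module with $M_i=0$ for $i\ll 0$ and $A_+M=M$, then $M=0$. Since your $e_i$ are homogeneous and span $R/(x_1,\dots,x_n)R$ over $k$, the quotient $N=R/\sum_i Ae_i$ satisfies $A_+N=N$ and is bounded below, so $N=0$. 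With this correction, parts (a), (b), and (c) are all fine; your care in (b) about avoiding only the top-dimensional minimal primes is exactly right.
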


\begin{theorem}\label{thm:algebras}
Let $S$ be a standard graded $k$-algebra of dimension $d\geq 1$, where $k$ is an infinite field, and let $A = k[X_1, \dotsc, X_d] \subseteq S$ and $B=k[Y_1, \dotsc, Y_d] \subseteq S$ be graded Noether normalizations of $S$.  There is a proper $k$-subspace arrangement $M \subsetneq k^d$ such that for any $c = [c_1, \dotsc, c_d] \in k^d$, we have $c \in k^d \setminus M$ if and only if the inclusion $k[X_1, \dotsc, X_{d-1}, \sum_{j=1}^d c_j Y_j] \subseteq S$ is a graded Noether normalization.
\end{theorem}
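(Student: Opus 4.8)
The plan is to use Proposition~\ref{prop:NN}(a) to rephrase the statement in terms of homogeneous systems of parameters, then to cut down modulo $X_1, \dots, X_{d-1}$ so that the problem becomes a question about a single degree-one element of a one-dimensional ring, and finally to identify the ``bad'' locus of coefficient vectors as a union of proper linear subspaces. Throughout, set $z_c := \sum_{j=1}^d c_j Y_j \in S_1$ for $c = [c_1, \dots, c_d] \in k^d$. Since $X_1, \dots, X_{d-1}, z_c$ consists of $d$ homogeneous elements of degree $1$, and a homogeneous system of parameters of $S$ is automatically algebraically independent over $k$ by Proposition~\ref{prop:NN}(b), Proposition~\ref{prop:NN}(a) shows that $k[X_1, \dots, X_{d-1}, z_c] \subseteq S$ is a graded Noether normalization if and only if $X_1, \dots, X_{d-1}, z_c$ is a homogeneous system of parameters of $S$.

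First I would put $T := S/(X_1, \dots, X_{d-1})S$, a standard graded $k$-algebra. Because $A \hookrightarrow S$ is a Noether normalization, $X_1, \dots, X_d$ is a homogeneous system of parameters of $S$ (Proposition~\ref{prop:NN}(a)), so $X_1, \dots, X_{d-1}$ is part of one, and a routine dimension count gives $\dim T = 1$; moreover $X_1, \dots, X_{d-1}, z_c$ is a homogeneous system of parameters of $S$ if and only if the image $\bar z_c$ of $z_c$ in $T$ is a parameter of $T$, i.e. $\dim T/\bar z_c T = 0$. Since $\dim T = 1$, this holds precisely when $\bar z_c$ avoids every prime in the finite, nonempty set $\Assh T$ of primes $\p$ of $T$ with $\dim T/\p = 1$.

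Next, for each $\p \in \Assh T$ I would consider $V_\p := \{\, c \in k^d : \bar z_c \in \p \,\}$. Since $c \mapsto \bar z_c$ is $k$-linear and $\bar z_c$ is homogeneous of degree $1$, the set $V_\p$ is the kernel of the $k$-linear map $k^d \to (T/\p)_1$ induced by $c \mapsto z_c$, hence a $k$-subspace of $k^d$. The one step I expect to require real care is showing $V_\p \subsetneq k^d$. If instead $V_\p = k^d$, then all the images $\bar Y_j$ lie in $\p$, so the preimage $\p'$ of $\p$ in $S$ contains $(X_1, \dots, X_{d-1}, Y_1, \dots, Y_d)S$; but $Y_1, \dots, Y_d$ is a homogeneous system of parameters of $S$ because $B \hookrightarrow S$ is a Noether normalization, so $(Y_1, \dots, Y_d)S$ is primary to the irrelevant maximal ideal $\m$ of $S$, forcing $\p' = \m$ and hence $\dim T/\p = \dim S/\p' = 0$ — contradicting $\dim T/\p = 1$. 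It is exactly here that the hypothesis that $B$ (not merely $A$) is a Noether normalization enters.

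Finally, set $M := \bigcup_{\p \in \Assh T} V_\p$, a finite union of proper $k$-subspaces of $k^d$; since $k$ is infinite this is a proper subset of $k^d$, so it is a proper $k$-subspace arrangement as required. Chaining the equivalences above, for $c \in k^d$ one has $c \notin M$ if and only if $\bar z_c \notin \p$ for all $\p \in \Assh T$, if and only if $\bar z_c$ is a parameter of $T$, if and only if $X_1, \dots, X_{d-1}, z_c$ is a homogeneous system of parameters of $S$, if and only if $k[X_1, \dots, X_{d-1}, z_c] \subseteq S$ is a graded Noether normalization. Apart from the properness of the $V_\p$, the remaining ingredients — the dimension bookkeeping for $T$, and the standard translation between ``homogeneous system of parameters'' and ``generates an $\m$-primary ideal'' — are routine.
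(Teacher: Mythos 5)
Your proposal is correct and takes essentially the same route as the paper's own proof: both reduce via Proposition~\ref{prop:NN} to the condition that $X_1,\dotsc,X_{d-1},\sum_j c_jY_j$ be a homogeneous system of parameters, identify the bad coefficient vectors as a finite union of $k$-subspaces indexed by the relevant primes over $(X_1,\dotsc,X_{d-1})S$ (your dimension-one primes of $T$ coincide with the paper's minimal primes of $J$), and prove properness of each subspace from the fact that $(Y_1,\dotsc,Y_d)S$ is primary to the irrelevant ideal. Passing to the quotient $T=S/(X_1,\dotsc,X_{d-1})S$ rather than working in $S$ is only a cosmetic difference.
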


\begin{proof}
Let $J := (X_1, \dotsc, X_{d-1})S$.  Let $\p_1, \dotsc, \p_r$ be the minimal primes of $J$.  Note that each $\p_j$ is homogeneous \cite[Lemma 1.5.6]{BH}.  For each $j=1, \dotsc, r$, let $V_j$ be the $k$-vector space consisting of homogeneous degree 1 elements of $\p_j$.  Let $V := \{\sum_{j=1}^d c_j Y_j \mid [c_1, \dotsc, c_d] \in k^d \}$.  Since the $Y_j$ are algebraically independent over $k$, they are linearly independent as well, so $V$ is $k$-isomorphic to $k^d$.  Let \[
N := \{y \in V \mid k[X_1, \dotsc, X_{d-1}, y]\subseteq S \text{ is \emph{not} a Noether normalization}\}.
\]

It follows from Proposition~\ref{prop:NN} and dimension considerations that $N$ has an alternate description, namely: \[
N = \{y \in V \mid \{X_1, \dotsc, X_{d-1}, y\} \text{ is not a homogeneous SOP for } S\}.
\]

So for $y \in V$, we have \[
y\in N \iff y \in \bigcup_i \p_i \iff y \in \bigcup_{1 \leq i \leq r} V_i,
\]
because $y$ is homogeneous of degree 1.

That is, $N = \left(\bigcup_{i=1}^r V_i\right) \cap V = \bigcup_{i=1}^r (V_i \cap V)$

On the other hand, we have $L := (Y_1, \dotsc, Y_d)S \nsubseteq \p_j$, since $L$ is $S_+$-primary and $\p_j$ is a prime properly contained in $S_+$.  Since $L$ is generated by $V$,  we have $V \nsubseteq \p_j$, and hence $V \nsubseteq V_j$ since $V_j \subseteq \p_j$.  Then by ``vector space avoidance'', we have $V \nsubseteq \bigcup_j V_j$.  But $N \subseteq \bigcup_j V_j$, so it follows that $N \subsetneq V$.

The subset $M$ of $k^d$ corresponding to $N$ has the required property.
\end{proof}

A more general version of vector space avoidance can be found at the end of the proof of Theorem~\ref{thm:mulalgebras}.

\begin{remark*}
One may tweak the proof above to get a more general result.  Namely, start by assuming that $S$ is finitely generated and positively-graded over $k$, but not standard-graded.  Then let $X_i$ and $Y_i$ be homogeneous (but not necessarily degree 1) elements of $S$ in such a way that $A$ and $B$ are Noether normalizations of $S$.  Then $k[X_1, \dotsc, X_{d-1}, \sum_{j=1}^d c_j Y_j]$ is a Noether normalization of $S$ if and only if $c\in k^d$ avoids a proper subspace arrangement $M$.

The tweak in the proof goes as follows: Let $\delta := \max \{\deg Y_i \mid 1\leq i \leq d\}$.  Let $\p_1, \dotsc, \p_r$ be as above, and for each $1\leq j\leq r$, let $V_j$ be the $k$-vector space spanned by the homogeneous elements of $\p_j$ of positive degrees $\leq \delta$.  Then the rest of the proof goes through without change.
\end{remark*}

\section{Exchange in minimal reductions of ideals}\label{sec:minred}

As a basic reference for the background material for this section (including all statements in the following paragraph), we recommend \cite[Chapters 5 and 8]{HuSw-book}.

Let $(R,\m,k)$ be a local ring.  For an ideal $I$, we denote the Rees ring of $I$ by $\RR(I) := R[It] = \bigoplus_{n \geq 0} I^n t^n \subseteq R[t]$, where $I^0 := R$ and $t$ is indeterminate.  We denote the fiber ring of $I$ by $\FF(I) := \RR(I) \otimes_R k = \bigoplus_{n\geq 0} (I^n / \m I^n) t^n$.  Recall that $\ia$ is a reduction of $I$ iff $\RR(I)$ is module-finite over $\RR(\ia)$, iff $\FF(I)$ is module-finite over the image of $\FF(\ia)$.  Also, a reduction $\ia$ of $I$ is a minimal reduction iff $\FF(\ia)$ is a polynomial ring over $k$, iff $\RR(\ia)$ is a polynomial ring over $R$.  In this case, we have that the natural map $\FF(\ia) \ra \FF(I)$ is injective (which is \emph{not} true in general if $\ia$ is a non-minimal reduction of $I$).  The \emph{analytic spread} of $I$ is defined to be $\ell(I) := \dim \FF(I)$.  When $R$ is Noetherian and $k$ is infinite, $\ell(I)$ coincides with the minimal number of generators of any minimal reduction of $I$.

\begin{theorem}\label{thm:reductions}
Let $(R,\m,k)$ be a Noetherian local ring with infinite residue field.  Let $I$ be a proper ideal of $R$ with analytic spread $d>0$, and let $\ia$, $\ib$ be minimal reductions of $I$.  Let $a_1, \dotsc, a_d$ be a minimal generating set of $\ia$.  Then for \emph{almost all} minimal generators $b$ of $\ib$, the ideal $(a_1, \dotsc, a_{d-1}, b)$ is a minimal reduction of $I$.  In particular, there is a finite set of ideals $J_i$ such that $\m \ib \subseteq J_i$ for each $i$ and $\bigcup_i J_i \subsetneq \ib$, such that $b\in \ib$ works if and only if $b \notin \bigcup_i J_i$.
\end{theorem}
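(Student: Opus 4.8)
The plan is to move the whole question into the fiber cone $S := \FF(I) = \bigoplus_{n\geq 0}(I^n/\m I^n)t^n$ and then apply Theorem~\ref{thm:algebras}. Because $R$ is Noetherian, $S$ is a standard graded $k$-algebra; it has dimension $\ell(I) = d \geq 1$ by hypothesis, and $k$ is infinite, so $S$ is exactly the type of ring handled by Theorem~\ref{thm:algebras}. The first step is the dictionary between minimal reductions of $I$ and graded Noether normalizations of $S$: for any $d$ elements $x_1, \dotsc, x_d$ of $I$, the ideal $(x_1, \dotsc, x_d)$ is a minimal reduction of $I$ if and only if $k[\overline{x_1}t, \dotsc, \overline{x_d}t] \subseteq S$ is a graded Noether normalization (with the displayed elements as its degree-one variables). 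One direction uses that a minimal reduction has fiber cone a polynomial ring on $d = \ell(I)$ variables, injecting into $S = \FF(I)$ with image $k[\overline{x_1}t, \dotsc, \overline{x_d}t]$ and with $S$ module-finite over it; the other uses that if $\overline{x_1}t, \dotsc, \overline{x_d}t$ are algebraically independent then $x_1, \dotsc, x_d$ is forced to be a minimal generating set of $(x_1, \dotsc, x_d)$, so (dimension count) its fiber cone is a polynomial ring, and the fact that $S$ is module-finite over the image $k[\overline{x_1}t, \dotsc, \overline{x_d}t]$ of that fiber cone exactly says $(x_1, \dotsc, x_d)$ is a reduction. Applying the dictionary to the given generating set $a_1, \dotsc, a_d$ of $\ia$ and to a minimal generating set $b_1, \dotsc, b_d$ of $\ib$, we get that $A := k[\overline{a_1}t, \dotsc, \overline{a_d}t]$ and $B := k[\overline{b_1}t, \dotsc, \overline{b_d}t]$ are graded Noether normalizations of $S$.

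Now apply Theorem~\ref{thm:algebras} to $S$, $A$, and $B$, with $X_i := \overline{a_i}t$ and $Y_j := \overline{b_j}t$. It produces a proper $k$-subspace arrangement $M \subsetneq k^d$, say $M = \bigcup_{i=1}^r W_i$ with each $W_i$ a proper linear subspace of $k^d$, such that for $c = [c_1, \dotsc, c_d]$ one has $c \notin M$ if and only if $k[\overline{a_1}t, \dotsc, \overline{a_{d-1}}t, \sum_{j=1}^d c_j \overline{b_j}t] \subseteq S$ is a graded Noether normalization.

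It remains to pull $M$ back into $\ib$. Since $\overline{b_1}t, \dotsc, \overline{b_d}t$ are algebraically independent in $S$, the elements $\overline{b_1}, \dotsc, \overline{b_d}$ form a $k$-basis of the image of $\ib$ in $I/\m I = S_1$, so there is a well-defined surjective $R$-linear map $\phi \colon \ib \to k^d$ --- where $R$ acts on $k^d$ through $R \twoheadrightarrow k$ --- carrying $b$ to the coordinate vector $c$ in the unique expansion $\overline{b} = \sum_{j=1}^d c_j \overline{b_j}$; note $\ker\phi = \ib \cap \m I \supseteq \m\ib$ and $\overline{b}t = \sum_{j=1}^d c_j \overline{b_j}t$ in $S_1$. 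Combining the dictionary of the first step with the equivalence above, $(a_1, \dotsc, a_{d-1}, b)$ is a minimal reduction of $I$ if and only if $\phi(b) \notin M$, i.e.\ if and only if $b \notin \bigcup_{i=1}^r \phi^{-1}(W_i)$. Put $J_i := \phi^{-1}(W_i)$; each $J_i$ is an ideal of $R$ (the preimage of a $k$-subspace under an $R$-linear map), it contains $\ker\phi$ and hence $\m\ib$, the union $\bigcup_i J_i = \phi^{-1}(M)$ is a proper subset of $\ib$ since $\phi$ is onto and $M \neq k^d$, and $b \in \ib$ makes $(a_1, \dotsc, a_{d-1}, b)$ a minimal reduction of $I$ precisely when $b \notin \bigcup_i J_i$; moreover, as $\m\ib \subseteq \bigcup_i J_i$, any such $b$ lies outside $\m\ib$, hence is a minimal generator of $\ib$, which is the sense in which ``almost all'' minimal generators of $\ib$ work. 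I expect the first step --- establishing the dictionary tightly in both directions, with careful attention to the number of generators and to the image of the fiber cone of $(x_1, \dotsc, x_d)$ inside $S$ --- to be the only genuinely delicate point; the final pull-back is then routine bookkeeping.
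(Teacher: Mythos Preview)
Your proof is correct and follows essentially the same approach as the paper: pass to the fiber cone $S=\FF(I)$, identify $A=\FF(\ia)$ and $B=\FF(\ib)$ as graded Noether normalizations of $S$, apply Theorem~\ref{thm:algebras}, and then pull the resulting subspace arrangement $M\subsetneq k^d$ back to $\ib$ to obtain the ideals $J_i$. Your write-up is in fact more explicit than the paper's on two points---the bidirectional dictionary between minimal reductions and Noether normalizations, and the construction of $\phi\colon \ib\to k^d$ with $J_i=\phi^{-1}(W_i)$---and your closing remark that any successful $b$ automatically avoids $\m\ib$ (hence is a minimal generator of $\ib$) is a nice clarification of the ``almost all'' phrasing.
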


\begin{proof}
Fix a minimal generating set $b_1, \dotsc, b_d$ of $\ib$ as well.

Let $S=\FF(I)$, $A = \FF(\ia)$, and $B = \FF(\ib)$, and consider $A$ and $B$ to be subrings of $S$ via the injections outlined above.  Let $X_1, \dotsc, X_d$ be the degree-1 images in $A$ of $a_1, \dotsc, a_d$ respectively (i.e. $X_i = \overline{a_i} t$ for each $i$), and let $Y_1, \dotsc, Y_d$ be the degree-1 images in $B$ of $b_1, \dotsc, b_d$ respectively.  The current data now match the hypotheses of Theorem~\ref{thm:algebras}.  Hence, there is a proper $k$-subspace arrangement $M \subsetneq k^d$ such that for any $c=[c_1, \dotsc, c_d] \in k^d$, $C=k[X_1, \dotsc, X_{d-1}, \sum_{j=1}^d c_j Y_j] \subseteq S$ is a graded Noether normalization iff $c \notin M$.  So for any $r=[r_1, \dotsc, r_d] \in R^d$, letting $\bar{r} := c \in k^d$, $(a_1, \dotsc, a_{d-1}, \sum_{j=1}^d r_j b_j)$ is a minimal reduction of $I$ iff $c \notin M$.  But the preimage of $M$ in $R^d$, dotted with the vector $\langle b_1, \dotsc, b_d\rangle$, is a finite union $H$ of ideals $J_i$ with $\m \ib \subseteq J_i$ and $H \subsetneq \ib$, as claimed.
\end{proof}

\section{Complete reductions, and some notes on terminology}
The notion of a \emph{complete reduction} of a set of ideals was first defined by Rees \cite{Rees-mix}, and later presented in a more general context by O'Carroll \cite{Oca-87, Oca-05}.  Note also that complete reductions give rise to \emph{joint reductions},
which are connected to the notion of mixed multiplicities and of great interest in their own right \cite{MR1483767,MR1467469,Rees-mix,MR959271,MR1154678,MR1302856,MR1726287,MR1085400}

\begin{definition*}[O'Carroll]
Let $(A,\m,k)$ be a Noetherian local ring, and let $I_1, \dotsc, I_n$ be a sequence of ideals of $A$.  A \emph{complete reduction} of $I_1, \dotsc, I_n$ \emph{of type $r$} is an $n\times r$ matrix $\{a_{ij}\}$ such that $a_{ij} \in I_i$ for each $i, j$, and such that if we set $I := \prod_{i=1}^n I_i$ and $b_j := \prod_{i=1}^n a_{ij}$ for each $1\leq j\leq r$, the ideal $(b_1, \dotsc, b_r)$ is a reduction of $I$.
\end{definition*}

He showed that complete reductions of type $r$ exist iff $r\geq \ell(I)$.

Now we fix some conventions for multigraded $k$-algebras:  Fix a field $k$.  For an $\N^n$-graded $k$-algebra $S$, for $1\leq i \leq n$ we denote $S^{(i)} := S_{(0,\dotsc, 0, 1, 0, \dotsc, 0)}$, where the 1 is in the $i$th spot.  Such a $k$-algebra is called \emph{standard} if $S_{(0, \dotsc, 0)} = k$ and $S$ is generated as a $k$-algebra by $S^{(1)} \cup \cdots \cup S^{(n)}$.  We define the \emph{diagonal subring} $S^\Delta := k[S_{(1, 1, \dotsc, 1)}]$, with $\N$-grading given by setting the degree of any element of $S_{(n, n, ..., n)}$ to $n$.

If $I_1, \dotsc, I_n$ are ideals of a local ring $(A,\m, k)$, the \emph{(multigraded) fiber ring} $S = \FF(I_1, \dotsc, I_n)$ is the standard $\N^n$-graded $k$-algebra defined by: \[
S_{(r_1, \dotsc, r_n)} := \frac{I_1^{r_1} \cdots I_n^{r_n}}{\m I_1^{r_1} \cdots I_n^{r_n}} t_1^{r_1} \cdots t_n^{r_n},
\]
with multiplication induced from that of $A$, where $t_1, \dotsc, t_n$ are indeterminates over $k$.

As we have seen, it is useful to translate ideal-theoretic ideas into ring-theoretic terms, and such a translation was undertaken by Kirby and Rees \cite{KR-mul1}, where the notion of the complete reduction of a multigraded ring is defined.  However, their notion corresponds to a different generalization of Rees' original ideal-theoretic notion than O'Carroll's generalization in \cite{Oca-87} and \cite{Oca-05}.  Since we find O'Carroll's ideal-theoretic notion more convenient than that of Kirby and Rees, we make the following ring-theoretic definition.

\begin{definition*}
Let $S$ be a standard $\N^n$-graded $k$-algebra.  A \emph{complete reduction} of $S$ \emph{of type $r$} is an $n \times r$ matrix $\{x_{ij}\}$ where each $x_{ij} \in S^{(i)}$, such that if for each $1\leq j \leq r$ we set $X_j := \prod_{i=1}^n x_{ij}$, $S^\Delta$ is module-finite over the subring $k[X_1, \dotsc, X_r]$.
\end{definition*}

\begin{lemma}\label{lem:multigraded}
Let $(A,\m, k)$ be a Noetherian local ring, and let $I_1, \dotsc, I_n$ be proper ideals of $A$.  Let $S := \FF(I_1, \dotsc, I_n)$ be the multigraded fiber ring of $I_1, \dotsc, I_n$.  Let $r \in \N$ and let $\{a_{ij}\}$ be an $n \times r$ matrix such that $a_{ij} \in I_i$ for each $i,j$.  For each $i,j$, let $x_{ij}$ be the image of $a_{ij}$ in $S^{(i)}$.  Then $\{a_{ij}\}$ is a complete reduction of the ideals $I_1, \dotsc, I_n$ if and only if $\{x_{ij}\}$ is a complete reduction of $S$.
\end{lemma}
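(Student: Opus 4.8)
The plan is to reduce the statement to the defining characterization of complete reductions of ideals via fiber rings, exploiting the identification of the diagonal subring $S^\Delta$ of the multigraded fiber ring $S = \FF(I_1, \dotsc, I_n)$ with (the fiber ring of) the product ideal $I := \prod_{i=1}^n I_i$. Concretely, I would first observe that by construction $S_{(m,m,\dotsc,m)} = \dfrac{I_1^m \cdots I_n^m}{\m I_1^m \cdots I_n^m} t_1^m \cdots t_n^m = \dfrac{I^m}{\m I^m} t_1^m \cdots t_n^m$, so that the $\N$-graded ring $S^\Delta = k[S_{(1,\dotsc,1)}]$ is naturally isomorphic, as a standard $\N$-graded $k$-algebra, to the subalgebra of $\FF(I)$ generated in degree~$1$ — and since $\FF(I)$ is itself standard graded, this is all of $\FF(I)$. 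Thus $S^\Delta \cong \FF(I)$ canonically.

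Next I would track the elements. Given the $n\times r$ matrix $\{a_{ij}\}$ with $a_{ij}\in I_i$, set $b_j := \prod_{i=1}^n a_{ij} \in I$ as in O'Carroll's definition, and let $x_{ij}$ be the image of $a_{ij}$ in $S^{(i)} = (I_i/\m I_i)t_i$; then $X_j := \prod_{i=1}^n x_{ij}$ is precisely the image of $b_j$ in $S_{(1,\dotsc,1)}$, which under the isomorphism $S^\Delta \cong \FF(I)$ corresponds to the degree-$1$ image $\overline{b_j}\, t$ of $b_j$ in $\FF(I)$. Hence the subring $k[X_1,\dotsc,X_r]\subseteq S^\Delta$ corresponds to the image of $\FF((b_1,\dotsc,b_r))$ inside $\FF(I)$.

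Now I would invoke the standard facts recalled in Section~\ref{sec:minred} (for which \cite[Chapters 5 and 8]{HuSw-book} is cited): $\ib := (b_1,\dotsc,b_r)$ is a reduction of $I$ if and only if $\FF(I)$ is module-finite over the image of $\FF(\ib)$. Combining this with the two identifications above, $\{a_{ij}\}$ is a complete reduction of $I_1,\dotsc,I_n$ — i.e. $(b_1,\dotsc,b_r)$ is a reduction of $I=\prod I_i$ — if and only if $S^\Delta = \FF(I)$ is module-finite over $k[X_1,\dotsc,X_r] = $ the image of $\FF(\ib)$, which is exactly the condition that $\{x_{ij}\}$ be a complete reduction of $S$. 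This gives the equivalence in both directions simultaneously.

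The main obstacle, and the step deserving the most care, is justifying the canonical isomorphism $S^\Delta \cong \FF(I)$ and checking it is compatible with the element correspondence — in particular verifying that the multiplication on $S$ restricted to the diagonal strands agrees with that on the Rees/fiber ring of $I$, and that the grading conventions match (degree of $S_{(m,\dotsc,m)}$ being $m$ on both sides). This is essentially bookkeeping with the definitions of the multigraded fiber ring and of $\FF(I)$, but one must be attentive that no subtlety arises from the passage $k[S_{(1,\dotsc,1)}] \subseteq S$ versus the full diagonal $\bigoplus_m S_{(m,\dotsc,m)}$; these coincide here because each $S_{(m,\dotsc,m)}$ is spanned by $m$-fold products of elements of $S_{(1,\dotsc,1)}$, which in turn holds because $I^m = (I)^m$ is generated by $m$-fold products of generators of $I$. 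Once that identification is in place, everything else is a direct translation.
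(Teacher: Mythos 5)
Your proposal is correct and follows essentially the same route as the paper: the paper's proof likewise identifies $S^\Delta$ with $\FF(I)$ for $I=\prod_i I_i$ and then reduces everything to the standard fact (recalled in the introduction to Section~\ref{sec:minred}) that $(b_1,\dotsc,b_r)$ is a reduction of $I$ iff $\FF(I)$ is module-finite over the subring generated by the degree-one images of the $b_j$. The only difference is that you spell out the bookkeeping behind the identification $S^\Delta\cong\FF(I)$ and the element correspondence $X_j\leftrightarrow \overline{b_j}\,t$, which the paper takes for granted.
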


\begin{proof}
Let $I := \prod_{i=1}^n I_i$ and $T := S^\Delta= \FF(I)$.  It is enough to prove the more general statement that for any sequence $b_1, \dotsc, b_r$ of elements of $I$, if $y_1, \dotsc, y_r$ are the images of the $b_j$ in $T_1 = I/\m I$, then $(b_1, \dotsc, b_r)$ is a reduction of $I$ if and only if $T$ is module-finite over the subring $k[y_1, \dotsc, y_r]$.  But this was covered in the introduction to Section~\ref{sec:minred}.
\end{proof}

\section{Exchange in complete reductions of multigraded $k$-algebras}

Let $S$ be a standard $\N^n$-graded $k$-algebra, and $d := \dim S^\Delta$.  If there is a complete reduction of type $r$, then $S^\Delta$ is module-finite over an $r$-generated $k$-algebra $B$, hence $d = \dim S^\Delta = \dim B \leq r$.  Moreover, if equality holds, then there can be no relations among the $d$ generators of $B$ over $k$, whence $B$ is a polynomial ring over $k$, so that $B \subseteq S^\Delta$ is a graded Noether normalization of $S^\Delta$.    It is natural to call a complete reduction \emph{minimal} if it is of type $d$.

\begin{theorem}\label{thm:mulalgebras}
Let $S$ be a standard $\N^n$-graded $k$-algebra, where $k$ is an infinite field, and let $S^\Delta$ be its diagonal subring.  Suppose that $d := \dim S^\Delta \geq 1$, and that $\{x_{ij} \mid 1\leq i \leq n\text{, }1\leq j \leq d\}$ and $\{w_{ij} \mid 1\leq i \leq n\text{, }1\leq j \leq d\}$ are complete reductions of $S$.  Then there is a proper Zariski-closed subset $M \subsetneq k^{n \times d}$ such that for any $C = \{c_{ij}\} \in k^{n \times d}$, we have $C \notin M$ if and only if $\{x^C_{ij}\}$ is a complete reduction of $S$, where \[
x^C_{ij} := \begin{cases}
x_{ij} &\text{ if } j < d, \\
\sum_{h=1}^d c_{ih} w_{ih} &\text{ if } j=d. \end{cases}
\]
\end{theorem}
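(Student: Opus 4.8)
The plan is to push everything into the standard graded $k$-algebra $R := S^\Delta$, which satisfies $R_0 = k$, is generated in degree $1$, and has $\dim R = d$, and then to re-run the proof of Theorem~\ref{thm:algebras} inside $R$, replacing ordinary vector space avoidance by a multilinear version at the one place where the old argument breaks. Set $X_j := \prod_{i=1}^n x_{ij}$ and $Y_j := \prod_{i=1}^n w_{ij}$ for $1 \le j \le d$; these all lie in $R_1 = S_{(1,1,\dots,1)}$, and by the definition of a complete reduction together with Proposition~\ref{prop:NN}, both $\{X_1,\dots,X_d\}$ and $\{Y_1,\dots,Y_d\}$ are homogeneous systems of parameters of $R$, so the ideals $(X_1,\dots,X_d)R$ and $(Y_1,\dots,Y_d)R$ are $R_+$-primary. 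Note that $X^C_d := \prod_{i=1}^n\bigl(\sum_{h=1}^d c_{ih} w_{ih}\bigr)$ is again homogeneous of degree $1$, and that straight from the definitions $\{x^C_{ij}\}$ is a complete reduction of $S$ precisely when $R$ is module-finite over $k[X_1,\dots,X_{d-1},X^C_d]$.

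First I would analyze $J := (X_1,\dots,X_{d-1})R$. Its minimal primes $\p_1,\dots,\p_r$ are homogeneous; Krull's height theorem forces $\hgt\p_\ell \le d-1$, so $\p_\ell \subsetneq R_+$, while $(X_1,\dots,X_d)R$ being $R_+$-primary forces $\dim R/\p_\ell \le \dim R/J \le 1$; hence $\dim R/\p_\ell = 1$ for every $\ell$. Since in a graded domain of dimension $1$ whose degree-zero part is a field the only nonzero homogeneous prime is the irrelevant ideal, I would deduce, for any homogeneous $y \in R_1$, the chain of equivalences: $R$ is module-finite over $k[X_1,\dots,X_{d-1},y]$ $\iff$ $\{X_1,\dots,X_{d-1},y\}$ is a homogeneous system of parameters of $R$ (Proposition~\ref{prop:NN}) $\iff$ $(X_1,\dots,X_{d-1},y)R$ is $R_+$-primary $\iff$ $y \notin \bigcup_{\ell=1}^r \p_\ell$. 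Applying this with $y = X^C_d$ shows that the only possible choice for $M$ is $\{C \in k^{n\times d} : X^C_d \in \bigcup_\ell \p_\ell\}$, and the theorem reduces to proving that this set is Zariski-closed and proper.

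Closedness is routine: $M = \bigcup_{\ell=1}^r M_\ell$ with $M_\ell := \{C : X^C_d \in \p_\ell\}$, and because $X^C_d$ is homogeneous of degree $1$, the condition $X^C_d \in \p_\ell$ is the same as $X^C_d$ lying in the $k$-subspace $\p_\ell \cap R_1$ of the finite-dimensional space $R_1$; expressing $X^C_d$ in coordinates against a $k$-basis of $R_1$ turns this into finitely many polynomial equations in the $c_{ih}$, so each $M_\ell$ and hence $M$ is Zariski-closed. For properness I would isolate and prove a multilinear generalization of vector space avoidance --- this is the ``more general version'' alluded to after Theorem~\ref{thm:algebras}: if $k$ is infinite, $U_1,\dots,U_n$ are finite-dimensional $k$-vector spaces, $\mu\colon U_1\times\cdots\times U_n \to V$ is $k$-multilinear, $W_1,\dots,W_r$ are $k$-subspaces of $V$, and for each $\ell$ there is a point at which $\mu$ avoids $W_\ell$, then there is a single point at which $\mu$ avoids $\bigcup_\ell W_\ell$; the proof chooses a linear functional $g_\ell$ on $V$ killing $W_\ell$ but not the $\ell$-th value, observes that $g_\ell\circ\mu$ is a nonzero polynomial on $U_1\times\cdots\times U_n$, and uses that over an infinite field the product $\prod_\ell(g_\ell\circ\mu)$ has a nonvanishing point. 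I would apply it with $U_i := \operatorname{span}_k\{w_{i1},\dots,w_{id}\}$, $V := R_1$, $W_\ell := \p_\ell \cap R_1$, and $\mu$ the restriction to $U_1\times\cdots\times U_n$ of multiplication $S^{(1)}\times\cdots\times S^{(n)} \to R_1$, so that the set of values of $\mu$ is exactly $\{X^C_d : C \in k^{n\times d}\}$; the hypothesis is met since $\mu(w_{1h},\dots,w_{nh}) = Y_h$ and, were all of $Y_1,\dots,Y_d$ in one $\p_\ell$, then $R_+ = \sqrt{(Y_1,\dots,Y_d)R} \subseteq \p_\ell$, contradicting $\hgt\p_\ell \le d-1$. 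Hence some $X^C_d$ avoids $\bigcup_\ell \p_\ell$, i.e.\ $M \subsetneq k^{n\times d}$.

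The step I expect to be the real obstacle is this properness argument. In Theorem~\ref{thm:algebras} the new last generator varied over an honest linear subspace of degree-$1$ elements, so plain vector space avoidance sufficed; here $X^C_d$ varies over the image of a degree-$n$ multilinear map, which is not a subspace, so one genuinely needs the multilinear-avoidance lemma --- or, equivalently, one argues with generic coefficients $c_{ih}$ over the rational function field $k(c_{ih})$ and uses that a nonzero polynomial over an infinite field is not the zero function. Everything else is a faithful transcription of the proof of Theorem~\ref{thm:algebras} into $R = S^\Delta$, using Proposition~\ref{prop:NN} and the module-finiteness dictionary of Lemma~\ref{lem:multigraded}.
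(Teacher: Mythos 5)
Your proof is correct and follows essentially the same route as the paper's: reduce to Noether normalization of $R=S^\Delta$ via the minimal primes $\p_\ell$ of $(X_1,\dotsc,X_{d-1})$, view $C\mapsto X^C_d$ as a multilinear map into $R_1$, use the special points $(w_{1h},\dotsc,w_{nh})$ (whose images are the $Y_h=W_h$) together with the height contradiction to see that no single $\p_\ell$ can absorb the whole image, and conclude by genericity over an infinite field. Your ``multilinear avoidance lemma'' (via linear functionals and a product of nonzero polynomials) is just a repackaging of the paper's closing step that a finite union of proper closed subvarieties cannot cover the irreducible space $\A^{nd}_k$.
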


\begin{proof}
Let $w^C_i := \sum_{j=1}^d c_{ij} w_{ij}$, and \begin{align*}
w^C &:= \prod_{i=1}^n w_i^C = \prod_{i=1}^n \left(\sum_{j=1}^d c_{ij} w_{ij} \right) \\
&= \sum_{(j_1,\dotsc, j_n) \in \{1, \dotsc, d\}^n} c_{1 j_1} c_{2 j_2} \cdots c_{n j_n} w_{1 j_1} \cdots w_{n j_n}.
\end{align*}
Let $u := \dim_k (S^\Delta)_1$.  Then after the identification $(S^\Delta)_1 \cong \A^u_k$, we have a morphism $g: \A^{nd}_k \ra \A^u_k$ (of degree $n$), given by $g(C) := w^C$.

For each $1 \leq j \leq d$, let $X_j := \displaystyle \prod_{i=1}^n x_{ij}$ and $W_j := \displaystyle \prod_{i=1}^n w_{ij}$.  Let $\p_1, \dotsc, \p_r$ be the minimal primes in $S^\Delta$ of the ideal $J := (X_1, \dotsc, X_{d-1}) \subseteq S^\Delta$.  Since $J$ is homogeneous, each $\p_i$ is homogeneous.  For $1\leq i \leq r$, let $V_i := \p_i \cap (S^\Delta)_1$.  Then each $V_i$ is a $k$-linear subspace of $k^u$, hence a closed subvariety of $(S^\Delta)_1=\A^u_k$.

Now let $M := \{C \in k^{n\times d} \mid \{x_{ij}^C\} \text{ is not a complete reduction of } S\}$.  We have that $C \in M \iff k[X_1, \dotsc, X_{d-1}, w^C] \subseteq S^\Delta$ is not a Noether normalization $\iff w^C \in \bigcup_i \p_i \iff g(C) \in \bigcup_i V_i \iff C \in \bigcup_i g^{-1}(V_i)$.  But for each $i$, $g^{-1}(V_i)$ is a closed subvariety of $\A^{nd}_k$, since $V_i$ is a closed subvariety of $\A^u_k$.

Suppose for some $i$ that $g^{-1}(V_i) = \A^{nd}_k$.  For each $1\leq k \leq d$, let $B^{(j)}$ be the $n\times d$ matrix where all entries in the $j$th column are $1$s, and all other entries are $0$.  We have for each $j$ that $B^{(j)} \in g^{-1}(V_i)$, so that (using the notation from the beginning of the proof): \[
W_j = \prod_{i=1}^n w_{ij} = \prod_{i=1}^n w_i^{B^{(j)}} = w^{B^{(j)}} = g(B^{(j)}) \in V_i.
\]
Thus, the $k$-span of the $W_j$ is contained in $V_i$, hence in $\p_i$, so that 
$(W_1, \dotsc, W_d) \subseteq \p_i$.   On the other hand, since $\{w_{ij}\}$ is a complete reduction of $S$, it follows that $S^\Delta$ is module-finite over $k[W_1, \dotsc, W_d]$, whence $\hgt (W_1, \dotsc, W_d) = \dim S^\Delta = d$.  This puts a height $d$ ideal $(W_1, \dotsc, W_d)$ inside a height $d-1$ ideal $\p_i$, yielding a contradiction.

Hence, for each $1 \leq i \leq r$, $g^{-1}(V_i)$ is a \emph{proper} subvariety of $\A^{nd}_k$.  Since a finite union of proper subvarieties of affine space over an infinite field must still be proper (i.e. $\A^{nd}_k$ is irreducible; see for instance \cite[Chapter II: Rule 1.2 and Proposition 3.11]{Kunz-book}), it follows that $M = \bigcup_i g^{-1}(V_i)$ is a proper subvariety of $\A^{nd}_k$, which implies that topologically it is a proper Zariski-closed subset of $k^{nd}$.
\end{proof}

A slight variant of the above theorem also holds, namely:

\begin{theorem}\label{thm:mulalgebras2}
Let $S$ be a standard $\N^n$-graded $k$-algebra, where $k$ is an infinite field, and let $S^\Delta$ be its diagonal subring.  Suppose that $d := \dim S^\Delta \geq 1$, and that $\{x_{ij} \mid 1\leq i \leq n\text{, }1\leq j \leq d\}$ and $\{w_{ij} \mid 1\leq i \leq n\text{, }1\leq j \leq d\}$ are complete reductions of $S$.  Then there is a proper Zariski-closed subset $M \subsetneq k^d$ such that for any $\vv = \{v_j\} \in k^d$, we have $\vv \notin M$ if and only if $\{x^{\vv}_{ij}\}$ is a complete reduction of $S$, where \[
x^\vv_{ij} = \begin{cases}
x_{ij} &\text{if } j<d,\\
\sum_{h=1}^d v_h w_{ih} &\text{if } j=d.
\end{cases}
\]
\end{theorem}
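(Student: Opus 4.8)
The plan is to derive Theorem~\ref{thm:mulalgebras2} from Theorem~\ref{thm:mulalgebras} by restricting the coefficient matrix to the ``diagonal'' copy of $k^d$ sitting inside $k^{n\times d}$. Define the $k$-linear injection $\iota\colon k^d \to k^{n\times d}$ that sends $\vv = \{v_j\}$ to the matrix $C = \{c_{ij}\}$ with $c_{ij} := v_j$ for every $i$ (so every row of $C$ is a copy of $\vv$). Comparing the two formulas, one has $\sum_{h=1}^d c_{ih} w_{ih} = \sum_{h=1}^d v_h w_{ih}$ whenever $C = \iota(\vv)$, and therefore $x_{ij}^{\iota(\vv)} = x_{ij}^{\vv}$ for all $i,j$. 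Hence, if $M' \subsetneq k^{n\times d}$ denotes the proper Zariski-closed set produced by Theorem~\ref{thm:mulalgebras}, then $\{x_{ij}^{\vv}\}$ is a complete reduction of $S$ if and only if $\iota(\vv) \notin M'$, that is, if and only if $\vv$ lies outside $M := \iota^{-1}(M')$. Since $\iota$ is a morphism of affine spaces it is Zariski-continuous, so $M$ is Zariski-closed in $k^d$ and the desired equivalence follows immediately.

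The one substantive point is that $M$ is a \emph{proper} subset of $k^d$; this cannot be read off formally, since the preimage of a proper closed set under a linear map may be everything (a line can lie inside a hyperplane). I would establish properness by reusing the internals of the proof of Theorem~\ref{thm:mulalgebras} rather than merely its statement. In that proof one has $M' = \bigcup_{i=1}^r g^{-1}(V_i)$, where $\p_1, \dots, \p_r$ are the homogeneous minimal primes of $J := (X_1, \dots, X_{d-1}) \subseteq S^\Delta$, where $V_i := \p_i \cap (S^\Delta)_1$, and where $g(C) := w^C = \prod_{i=1}^n \bigl( \sum_{h=1}^d c_{ih} w_{ih} \bigr)$. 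Composing with $\iota$ yields $M = \bigcup_{i=1}^r \psi^{-1}(V_i)$ for the morphism $\psi := g \circ \iota \colon \A^d_k \to (S^\Delta)_1 = \A^u_k$ given by $\psi(\vv) = \prod_{i=1}^n \bigl( \sum_{h=1}^d v_h w_{ih} \bigr)$, and each $\psi^{-1}(V_i)$ is Zariski-closed in $\A^d_k$. If some $\psi^{-1}(V_i)$ were all of $\A^d_k$, then evaluating $\psi$ at the standard basis vectors $\ee_1, \dots, \ee_d$ would give $\psi(\ee_j) = \prod_{i=1}^n w_{ij} = W_j \in V_i \subseteq \p_i$ for every $j$, whence $(W_1, \dots, W_d) \subseteq \p_i$. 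But $\{w_{ij}\}$ is a complete reduction, so $S^\Delta$ is module-finite over $k[W_1, \dots, W_d]$, which forces $\hgt(W_1, \dots, W_d) = d$, contradicting $\hgt \p_i \leq d-1$ (Krull's height theorem, $\p_i$ being minimal over a $(d-1)$-generated ideal). Therefore each $\psi^{-1}(V_i)$ is a proper closed subvariety, and since $\A^d_k$ is irreducible, the finite union $M$ is a proper Zariski-closed subset of $k^d$.

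The main obstacle --- essentially the only one --- is exactly this properness step: one has to exclude the degenerate scenario in which the diagonal line through coefficient space happens to lie entirely in the ``bad'' locus $M'$. All the rest (the reduction to Theorem~\ref{thm:mulalgebras}, the topological claim, and the equivalence with complete reductions) is routine bookkeeping. The observation that makes properness work is that the evaluation trick from the proof of Theorem~\ref{thm:mulalgebras} --- substituting the matrices $B^{(j)}$ there, and the vectors $\ee_j$ here --- still recovers the elements $W_j$, whose joint ideal has height $d$; this is precisely what one needs to contradict containment in a height-$(d-1)$ prime.
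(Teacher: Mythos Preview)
Your proposal is correct and is essentially the paper's own proof: the paper also defines a polynomial map $f\colon \A^d_k \to \A^u_k$ (your $\psi = g\circ\iota$), writes $M = \bigcup_i f^{-1}(V_i)$, and rules out $f^{-1}(V_i)=\A^d_k$ by evaluating at the standard basis vectors $\ee_j$ to obtain $f(\ee_j)=W_j$, yielding the same height contradiction. Your explicit factorization through the linear embedding $\iota$ is a minor presentational addition, but the argument is the same.
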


\begin{proof}
The proof is very similar to that of Theorem~\ref{thm:mulalgebras}, so we use the same notations as in that proof, with the exception of the meaning of the symbol $M$.  In this case, we get a polynomial map $f: \A^d_k \ra \A^u_k$, and if $M$ is the set of all vectors that do not result in a complete reduction of $S$, then $M = \bigcup_i f^{-1}(V_i)$.

All that needs to be shown is that $f^{-1}(V_i) \neq \A^d_k$.  Let $\ee_j$ be the $j$th standard basis vector of $k^d$ for $1 \leq j  \leq d$.  If $f^{-1}(V_i)= \A^d_k$, then $f(\ee_j) \in V_i$ for each $j$.  But $f(\ee_j) = W_j$, and we get the same contradiction as in the proof of the preceding theorem.
\end{proof}

\section{Exchange in complete reductions of sequences of ideals}

According to \cite{Oca-87}, complete reductions of $I_1, \dotsc, I_n$ of type $r$ exist if and only if $r \geq \dim S^\Delta = \ell(\prod_{i=1}^n I_i)$, where $S = \FF(I_1, \dotsc, I_n)$ (so that $S^\Delta = \FF(I)$).

With this background, we transform Theorems~\ref{thm:mulalgebras} and \ref{thm:mulalgebras2} into the following theorem about minimal complete reductions of sequences of ideals, in the same way that we transformed Theorem~\ref{thm:algebras} into Theorem~\ref{thm:reductions}, as follows:

\begin{theorem}\label{thm:completereds}
Let $(R,\m,k)$ be a Noetherian local ring, where $k$ is infinite, and let $I_1, \dotsc, I_n$ be a sequence of ideals of $R$, $I := \prod_{i=1}^n I_i$.  Suppose that $d := \ell(I) \geq 1$, and that $\{a_{ij}\}$ and $\{b_{ij}\}$ are complete reductions of $I_1, \dotsc, I_n$ of type $d$.  Then for \emph{almost all} matrices $Z = \{z_{ij}\} \in R^{n \times d}$ (resp. vectors $\uu = \{u_j\} \in R^d$), we have that $\{a_{ij}^Z\}$ (resp. $\{a_{ij}^\uu\}$) is a complete reduction of $I_1, \dotsc, I_n$, where \[
a_{ij}^Z := \begin{cases}
a_{ij} &\text{if } j<d,\\
\sum_{h=1}^d z_{ih} b_{ih} &\text{if } j=d.
\end{cases}
\] (resp. \[
a_{ij}^\uu := \begin{cases}
a_{ij} &\text{if } j<d,\\
\sum_{h=1}^d u_h b_{ih} &\text{if } j=d.\text{)}
\end{cases}
\]
\end{theorem}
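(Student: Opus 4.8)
The plan is to deduce this from the ring-theoretic exchange statements already established in Theorems~\ref{thm:mulalgebras} and \ref{thm:mulalgebras2}, by passing to the multigraded fiber ring via Lemma~\ref{lem:multigraded} --- in exactly the way that Theorem~\ref{thm:reductions} was deduced from Theorem~\ref{thm:algebras}. So the proof should be essentially a dictionary translation, with the only real work being to check that forming the new matrix and then passing to the fiber ring agrees with passing to the fiber ring and then forming the new matrix.

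Concretely, I would set $S := \FF(I_1, \dotsc, I_n)$, so that $S^\Delta = \FF(I)$ and, by the existence criterion recalled at the start of this section, $d = \ell(I) = \dim S^\Delta \geq 1$, matching the hypothesis of Theorems~\ref{thm:mulalgebras} and \ref{thm:mulalgebras2}. For each $i,j$ let $x_{ij}$ (resp. $w_{ij}$) denote the image of $a_{ij}$ (resp. $b_{ij}$) in $S^{(i)}$. By Lemma~\ref{lem:multigraded}, $\{x_{ij}\}$ and $\{w_{ij}\}$ are complete reductions of $S$, so Theorem~\ref{thm:mulalgebras} (resp. Theorem~\ref{thm:mulalgebras2}) applies and yields a proper Zariski-closed subset $M \subsetneq k^{n\times d}$ (resp. $M \subsetneq k^d$) such that $C \notin M$ if and only if $\{x_{ij}^C\}$ is a complete reduction of $S$.

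Next I would pull this back along reduction modulo $\m$. Given a matrix $Z = \{z_{ij}\} \in R^{n\times d}$, write $C := \overline{Z} \in k^{n\times d}$ for its entrywise residue. The key (routine) point is that the image of $a_{ij}^Z$ in $S^{(i)}$ equals $x_{ij}^C$: for $j < d$ both are $x_{ij}$, and for $j = d$ we have $\overline{\sum_h z_{ih} b_{ih}} = \sum_h \overline{z_{ih}}\, w_{ih} = \sum_h c_{ih} w_{ih}$, since reduction to the fiber ring is $k$-linear. Hence, applying Lemma~\ref{lem:multigraded} once more, $\{a_{ij}^Z\}$ is a complete reduction of $I_1, \dotsc, I_n$ if and only if $\{x_{ij}^C\}$ is a complete reduction of $S$, i.e. if and only if $\overline{Z} \notin M$. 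Because $k$ is infinite, $k^{n\times d} \setminus M$ is a dense (in particular nonempty) open subset of $k^{n\times d}$; its preimage under the surjection $R^{n\times d} \twoheadrightarrow k^{n\times d}$ is precisely the set of matrices $Z$ that work, which is the intended meaning of ``almost all''. The vector case is identical, using Theorem~\ref{thm:mulalgebras2} and the surjection $R^d \twoheadrightarrow k^d$.

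I do not expect a genuine obstacle here, since the argument is a direct transport of already-proved facts through Lemma~\ref{lem:multigraded}. The only items that truly need care are the compatibility noted above (that passing to $S^{(i)}$ commutes with forming the linear combination $a_{ij}^Z \mapsto x_{ij}^C$), and being honest about the strength of ``almost all'': unlike in Theorem~\ref{thm:reductions}, where $M$ was a subspace arrangement and its preimage a finite union of $R$-submodules, here $M$ is only known to be Zariski-closed, so ``almost all'' should be understood in the Zariski-generic sense (the good set is the preimage of a dense open subset) rather than as an explicit finite union of submodules.
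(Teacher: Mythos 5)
Your proposal is correct and follows essentially the same route as the paper: pass to the multigraded fiber ring, apply Lemma~\ref{lem:multigraded} to translate both complete reductions, invoke Theorem~\ref{thm:mulalgebras} (resp.\ Theorem~\ref{thm:mulalgebras2}) to get the proper Zariski-closed bad set $M$, and pull back along reduction modulo $\m$. Your extra check that forming $a_{ij}^Z$ commutes with passing to $S^{(i)}$, and your remark on the Zariski-generic meaning of ``almost all,'' are exactly the (routine) points the paper leaves implicit.
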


\begin{proof}
Let $S := \FF(I_1, \dotsc, I_n)$ be the multigraded fiber ring, and let $\{x_{ij}\}$ and $\{y_{ij}\}$ be the images in $S^{(i)}$ of $\{a_{ij}\}$, $\{b_{ij}\}$ respectively.

By Lemma~\ref{lem:multigraded}, $\{x_{ij}\}$ and $\{y_{ij}\}$ are complete reductions of $S$, and since $\ell(I) = \dim S^\Delta = d$, they are minimal.  Then by Theorem~\ref{thm:mulalgebras} (resp. Theorem~\ref{thm:mulalgebras2}), there is a proper Zariski-closed subset $M \subsetneq k^{n\times d}$ (resp. $\subsetneq k^d$) such that for matrices $C \in k^{n\times d}$ (resp. vectors $\mathbf{v} \in k^d$), we have $C$ (resp. $\vv$) $\notin M$ iff $\{x_{ij}^C\}$ (resp. $\{x_{ij}^\vv\}$) is a complete reduction of $S$.  By Lemma~\ref{lem:multigraded} again, this holds iff $\{a_{ij}^Z\}$ (resp. $\{a_{ij}^\uu\}$) is a complete reduction of $I_1, \dotsc, I_n$ for every, equivalently some, lifting $Z$ of $C$ (resp. lifting $\uu$ of $\vv$).
\end{proof}

\section{Generic matroids: the unifying context}\label{sec:gmat}

\begin{definition*}
Let $E$ be a set, $\tau$ a topology on $E$, $\BB$ a collection of finite subsets of $E$, and $\MM$ a nonempty collection of subsets of $E$.  For each $M \in \MM$, let $\BB_M := \{B \in \BB \mid B \subseteq M\}$, and $C_M := \bigcup \{B \mid B \in \BB_M\}$.  We say that the tuple $(E, \tau, \BB, \MM)$ is a \emph{generic matroid} if the following conditions hold: \begin{enumerate}
\item For each $M \in \MM$, the set $M$ equipped with the collection $\BB_M$ as bases forms a matroid.
\item $\BB = \bigcup_{M\in \MM} \BB_M$.
\item (Exchange property): For any $b \in B \in \BB$ and $M \in \MM$, there exists a $\tau$-open set $U$ such that $U' := U \cap C_M \neq \emptyset$ and such that for any $x \in M$, $x \in U'$ if and only if $(B \setminus \{b\}) \cup \{x\} \in \BB$.
\end{enumerate}
\end{definition*}

The first thing to note is that if $(E,\tau,\BB,\MM)$ is a generic matroid and $\sigma$ is a finer topology than $\tau$, then $(E,\sigma,\BB,\MM)$ is also a generic matroid.  That is, a coarser topology amounts to a stronger statement.  Next, we note that generic matroids satisfy the same equicardinality condition that matroids do:

\begin{prop}\label{prop:equicard}
Let $(E,\tau, \BB, \MM)$ be a generic matroid.  Then for any $B, B' \in \BB$, $\#(B) = \#(B')$.
\end{prop}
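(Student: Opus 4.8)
The plan is to reduce the equicardinality statement for a generic matroid to the known equicardinality property of ordinary matroids, using condition (1) of the definition together with condition (2) to pass between elements of $\BB$ that need not lie in a common member of $\MM$. First I would observe that it suffices to connect any two bases $B, B' \in \BB$ by a chain of bases, consecutive pairs of which lie inside a common matroid $\BB_M$; within any such matroid all bases have equal cardinality, so equality of cardinalities propagates along the chain.

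The key steps, in order, are as follows. By condition (2), there exist $M, M' \in \MM$ with $B \in \BB_M$ and $B' \in \BB_{M'}$. Now pick any element $b \in B$ and apply the exchange property (condition (3)) to the pair $b \in B$ and the set $M'$: there is a $\tau$-open $U$ with $U' = U \cap C_{M'} \neq \emptyset$, and for $x \in M'$ we have $x \in U'$ iff $(B \setminus \{b\}) \cup \{x\} \in \BB$. Since $U' \neq \emptyset$ and $U' \subseteq C_{M'} = \bigcup_{B'' \in \BB_{M'}} B''$, I can choose $x \in U'$ lying in some basis $B'' \in \BB_{M'}$. Then $B_1 := (B \setminus \{b\}) \cup \{x\}$ is in $\BB$, and — here is the crucial bookkeeping point — since $x \in M'$ and the other elements $B \setminus \{b\}$ need not be in $M'$, $B_1$ itself need not lie in $\BB_{M'}$; but $B_1$ agrees with $B$ except in one element, so $\#(B_1) = \#(B)$ provided $x \notin B \setminus \{b\}$, which I should check or arrange (if $x \in B \setminus \{b\}$ the replacement set has smaller size and fails to be a basis of the matroid $\BB_M$ containing $B$, so in fact such $x$ cannot be a valid replacement — this needs a short argument using condition (1)).

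The main obstacle I anticipate is precisely this matching-up: the exchange property lets me swap one element of $B$ for an element of $M'$, but it does not immediately place the resulting basis inside $\BB_{M'}$, so a single application does not land me in $M'$'s matroid. The fix I would pursue is an inductive one: I would show that by repeatedly applying the exchange property, swapping out the elements of $B$ one at a time and replacing each with an element drawn (via $C_{M'}$) from a basis of $\BB_{M'}$, after at most $\#(B)$ steps I reach a basis contained in $M'$, hence in $\BB_{M'}$, while cardinality is preserved at each step because each step is a genuine single-element exchange producing an element of $\BB$. One must be a little careful that at each intermediate stage the replacement element is new (not already present), so that the cardinality genuinely stays fixed; the matroid axioms applied inside $\BB_M$ (or the relevant intermediate matroid) rule out the degenerate case. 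Once $B$ is connected to some basis in $\BB_{M'}$ of the same cardinality, equicardinality inside $\BB_{M'}$ finishes the argument, giving $\#(B) = \#(B')$.

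Alternatively, and perhaps more cleanly, I would try to argue directly that any $B \in \BB$ lying in $\BB_M$ and any $\tilde B \in \BB$ lying in $\BB_{M}$ for the \emph{same} $M$ have equal cardinality (immediate from condition (1)), and then establish a ``connectedness'' lemma: the relation on $\BB$ generated by ``both lie in a common $\BB_M$'' has the property that the single-element exchange of condition (3) never changes cardinality, so cardinality is constant on each connected component, and in fact there is only one component. Whichever route, the heart of the proof is extracting from the $\tau$-open set $U$ in condition (3) an actual basis element of the target matroid $\BB_M$, which is exactly what $U' \cap C_M \neq \emptyset$ guarantees; the topology plays no further role beyond ensuring this nonemptiness, so in the end the proof is purely combinatorial.
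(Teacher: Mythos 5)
Your proposal is essentially the paper's own proof: fix $B = \{a_1,\dotsc,a_n\}$ and an arbitrary $M' \in \MM$, apply the exchange property $n$ times to replace each $a_i$ in turn by some element $x_i \in C_{M'}$, observe that the resulting basis $X = \{x_1,\dotsc,x_n\}$ is contained in $M'$ and hence lies in $\BB_{M'}$, and conclude $\#(B) = n = \rank M' = \#(B')$ for every $B' \in \BB_{M'}$, which suffices since $\BB = \bigcup_{M\in\MM}\BB_M$. The one caveat you flag --- that a replacement element might coincide with an element already present, so that the swap silently drops the cardinality --- is also passed over in the paper, which simply asserts $\#(X) = n$; so your sketch matches the published argument in both substance and level of detail.
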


\begin{proof}
Let $B = \{a_1, \dotsc, a_n\} \in \BB$ and $M \in \MM$.  By the exchange property, there exists $x_1 \in C_M$ such that $\{x_1, a_2, \dotsc, a_n\} \in \BB$.  After $n-1$ more iterations of the same property, we find successive elements $x_2, x_3, \dotsc, x_n \in C_M$ to replace $a_2, a_3,  \dotsc, a_n$ respectively, so that $X := \{x_1, x_2, \dotsc, x_n\} \in \BB$.  But since $X \subseteq M$, we have $X \in \BB_M$, so that $n = \#(X) = \rank M$ (by which we mean the rank of the matroid $M$ in the ordinary sense -- \emph{i.e.} the cardinality of any basis of $M$), whence for any $B' \in \BB_M$, we have $\#(B) = n= \#(B')$.  Since $M$ was arbitrary, and $\BB = \bigcup_{M\in \MM} \BB_M$, the proof is complete.
\end{proof}

In fact, the above proof shows more.  It shows that given any pair of matroids $M, M' \in \MM$ of a generic matroid $(E,\tau,\BB,\MM)$, we can get from a given basis of $M$ to some basis of $M'$ in at most $d$ `steps', where $d$ is the common rank of $M$ and $M'$.

We finish with examples of generic matroids:

\begin{example*}[Matroids]
Consider an ordinary matroid with ground set $E$ and bases $\BB$.  Let $\tau$ be the discrete topology on $E$.  Then it is easy to show that both $(E, \tau, \BB, \{E\})$ and $(E, \tau, \BB, \BB)$ are generic matroids.
\end{example*}

\begin{example*}[Topological spaces]
For a set $E$ with a given topology $\tau$, $(E, \tau, \{\emptyset\}, \{E\})$ is a generic matroid.
\end{example*}

\begin{example*}[Graded Noether normalizations]
Let $S$ be a standard $\N$-graded $k$-algebra, and let $k$ be an infinite field.  Let $E := S$ and let $\MM$ be the set of subalgebras $A$ such that $A \subseteq S$ is a graded Noether normalization.  We use the topology $\tau$ where the closed sets consist of all finite unions of vector subspaces of $S_1$.  Let $\BB$ be minimal $k$-algebra generating sets taken from $S_1$ of elements of $\MM$.  If $\dim S=0$, then $\MM=\{k\}$ and $\BB=\{\emptyset\}$, so  $(E,\tau,\BB,\MM)$ is trivially a generic matroid.  On the other hand, if $\dim S>0$, then Theorem~\ref{thm:algebras} shows that $(E,\tau,\BB,\MM)$ is a generic matroid.
\end{example*}

\begin{example*}[Minimal reductions of ideals]
Let $(R,\m, k)$ be a Noetherian local ring with infinite residue field, and $I$ a proper ideal.  Let $E := I$, let $\MM$ be the minimal reductions of $I$, let $\BB$ be the minimal generating sets of elements of $\MM$, and let $\tau$ be the topology whose closed sets consist of finite unions of ideals $J$ such that $\m I \subseteq J \subseteq I$.  If $I$ is nilpotent, then $\MM=\{0\}$ and $\BB=\{\emptyset\}$, so  $(E,\tau,\BB,\MM)$ is trivially a generic matroid.  If $I$ is non-nilpotent, then $\ell(I)>0$, so Theorem~\ref{thm:reductions} shows that $(E,\tau,\BB,\MM)$ is a generic matroid (even though it is not necessarily a matroid in the ordinary sense, as shown in the example from section 1.  Essentially the same example shows that ordinary matroids do not suffice in the case of graded Noether normalizations either.).
\end{example*}

\begin{example*}[Complete reductions of multigraded algebras]
Let $S$ be a standard $\N^n$-graded $k$-algebra, where $k$ is an infinite field and $d := \dim S^\Delta$.  Let $E := S^{(1)} \times \cdots \times S^{(n)}$, with elements considered as column vectors.  Let $\BB$ consist of those $d$-tuples $\{\mathbf{x}_j\}$ of elements of $E$ such that the corresponding matrix $\{x_{ij}\}$ is a complete reduction of $S$.  Let $\MM$ be the collection $k$-subspaces of $E$ spanned by elements of $\BB$.  For $\tau$, we may use the Zariski topology on the $k$-linear space $E$ (although there are better choices).    If $d=0$, then $\BB=\{\emptyset\}$ and $\MM = \{\mathbf{0}\}$, so  $(E, \tau,\BB,\MM)$ is trivially a generic matroid.  If $d>0$, then Theorem~\ref{thm:mulalgebras2} shows that $(E, \tau,\BB,\MM)$ is a generic matroid.
\end{example*}

\begin{example*}[Complete reductions of sequences of ideals]
Let $(R,\m,k)$ be a Noetherian local ring with infinite residue field, let $I_1, \dotsc, I_n$; let $I := \prod_{i=1}^n I_i$ (the ideal-theoretic product, not the set-theoretic one) and $d := \ell(I)$.  Let $E := I_1 \times \cdots \times I_n$ (the set-theoretic product), with elements considered as column vectors.  Let $\BB$ consist of those $d$-tuples $\{\mathbf{a}_j\}$ of elements of $E$ such that the corresponding matrix $\{a_{ij}\}$ is a complete reduction of $I_1, \dotsc, I_n$.    Let $\MM$ be the collection of $R$-submodules of $E$ generated by elements of $\BB$.  Let the closed sets of $\tau$ consist of all preimages of Zariski-closed subsets of $I_1/\m I_1 \times \cdots \times I_n / \m I_n$.  If $d=0$, then $(E,\tau,\BB,\MM)$ is trivially a generic matroid.  If $d>0$, then Theorem~\ref{thm:completereds} shows that $(E,\tau,\BB,\MM)$ is a generic matroid.
\end{example*}

\providecommand{\bysame}{\leavevmode\hbox to3em{\hrulefill}\thinspace}
\providecommand{\MR}{\relax\ifhmode\unskip\space\fi MR }
\providecommand{\MRhref}[2]{%
  \href{http://www.ams.org/mathscinet-getitem?mr=#1}{#2}
}
\providecommand{\href}[2]{#2}


\begin{thebibliography}{HHRT97}

\bibitem[BH97]{BH}
Winfried Bruns and J{\"u}rgen Herzog, \emph{{Cohen}-{Macaulay} rings}, revised
  ed., Cambridge studies in advanced mathematics, no.~39, Cambridge Univ.
  Press, Cambridge, 1997.

\bibitem[Eps05]{nme*spread}
Neil Epstein, \emph{A tight closure analogue of analytic spread}, Math. Proc.
  Cambridge Philos. Soc. \textbf{139} (2005), no.~2, 371--383.

\bibitem[Eps10]{nme-sp}
\bysame, \emph{Reductions and special parts of closures}, J. Algebra
  \textbf{323} (2010), no.~8, 2209--2225.

\bibitem[HH02]{HerHi-pmat}
J{\"u}rgen Herzog and Takayuki Hibi, \emph{Discrete polymatroids}, J. Algebraic
  Combin. \textbf{16} (2002), no.~3, 239--268 (2003).

\bibitem[HHRT97]{MR1483767}
Manfred Herrmann, Eero Hyry, J{\"u}rgen Ribbe, and Zhongming Tang,
  \emph{Reduction numbers and multiplicities of multigraded structures}, J.
  Algebra \textbf{197} (1997), no.~2, 311--341.

\bibitem[HS06]{HuSw-book}
Craig Huneke and Irena Swanson, \emph{Integral closure of ideals, rings, and
  modules}, London Math. Soc. Lecture Note Ser., vol. 336, Cambridge Univ.
  Press, Cambridge, 2006.

\bibitem[Hyr99]{MR1467469}
Eero Hyry, \emph{The diagonal subring and the {C}ohen-{M}acaulay property of a
  multigraded ring}, Trans. Amer. Math. Soc. \textbf{351} (1999), no.~6,
  2213--2232.

\bibitem[KR94]{KR-mul1}
D.~Kirby and D.~Rees, \emph{Multiplicities in graded rings. {I}. {T}he general
  theory}, Commutative algebra: syzygies, multiplicities, and birational
  algebra ({S}outh {H}adley, {MA}, 1992), Contemp. Math., vol. 159, Amer. Math.
  Soc., Providence, RI, 1994, pp.~209--267.

\bibitem[Kun85]{Kunz-book}
Ernst Kunz, \emph{Introduction to commutative algebra and algebraic geometry},
  Birkh\"auser Boston Inc., Boston, MA, 1985.

\bibitem[Kun86]{Kung-mat}
Joseph P.~S. Kung, \emph{A source book in matroid theory}, Birkh\"auser Boston
  Inc., Boston, MA, 1986, With a foreword by Gian-Carlo Rota.

\bibitem[NR54]{NR}
D.~G. Northcott and D.~Rees, \emph{Reductions of ideals in local rings}, Proc.
  Cambridge Philos. Soc. \textbf{50} (1954), no.~2, 145--158.

\bibitem[O'C87]{Oca-87}
Liam O'Carroll, \emph{On two theorems concerning reductions in local rings}, J.
  Math. Kyoto Univ. \textbf{27} (1987), no.~1, 61--67.

\bibitem[O'C05]{Oca-05}
\bysame, \emph{Around the {E}akin-{S}athaye theorem}, J. Algebra \textbf{291}
  (2005), no.~1, 259--268.

\bibitem[OH98]{OH}
Hidefumi Ohsugi and Takayuki Hibi, \emph{Normal polytopes arising from finite
  graphs}, J. Algebra \textbf{207} (1998), no.~2, 409--426.

\bibitem[Oxl92]{Oxley-mat}
James~G. Oxley, \emph{Matroid theory}, Oxford Science Publications, The
  Clarendon Press Oxford University Press, New York, 1992.

\bibitem[Ree84]{Rees-mix}
D.~Rees, \emph{Generalizations of reductions and mixed multiplicities}, J.
  London Math. Soc. (2) \textbf{29} (1984), no.~3, 397--414.

\bibitem[RS88]{MR959271}
D.~Rees and Judith~D. Sally, \emph{General elements and joint reductions},
  Michigan Math. J. \textbf{35} (1988), no.~2, 241--254.

\bibitem[Sim94]{Sim-clean}
Robert~Samuel Simon, \emph{Combinatorial properties of ``cleanness''}, J.
  Algebra \textbf{167} (1994), no.~2, 361--388.

\bibitem[SoP75]{bicyclic}
J.~M.~S. Sim\~{o}es Pereira, \emph{On matroids on edge sets of graphs with
  connected subgraphs as circuits. {II}}, Discrete Math. \textbf{12} (1975),
  55--78.

\bibitem[SVV98]{SVV}
Aron Simis, Wolmer~V. Vasconcelos, and Rafael~H. Villarreal, \emph{The integral
  closure of subrings associated to graphs}, J. Algebra \textbf{199} (1998),
  no.~1, 281--289.

\bibitem[Swa92]{MR1154678}
Irena Swanson, \emph{Joint reductions, tight closure, and the {B}rian\c
  con-{S}koda theorem}, J. Algebra \textbf{147} (1992), no.~1, 128--136.

\bibitem[Swa94]{MR1302856}
\bysame, \emph{Joint reductions, tight closure, and the {B}rian\c con-{S}koda
  theorem. {II}}, J. Algebra \textbf{170} (1994), no.~2, 567--583.

\bibitem[Tan99]{MR1726287}
Zhongming Tang, \emph{A note on the {C}ohen-{M}acaulayness of multi-{R}ees
  rings}, Comm. Algebra \textbf{27} (1999), no.~12, 5967--5974.

\bibitem[Ver91]{MR1085400}
J.~K. Verma, \emph{Joint reductions and {R}ees algebras}, Math. Proc. Cambridge
  Philos. Soc. \textbf{109} (1991), no.~2, 335--342.

\bibitem[Wag99]{Wag-99}
David~G. Wagner, \emph{Algebras related to matroids represented in
  characteristic zero}, European J. Combin. \textbf{20} (1999), no.~7,
  701--711.

\bibitem[Whi86]{White-tmat}
Neil White (ed.), \emph{Theory of matroids}, Encyclopedia of Mathematics and
  its Applications, vol.~26, Cambridge Univ. Press, Cambridge, 1986.

\bibitem[Yao02]{Yao-pdec}
Yongwei Yao, \emph{Primary decomposition: compatibility, independence and
  linear growth}, Proc. Amer. Math. Soc. \textbf{130} (2002), no.~6,
  1629--1637.

\end{thebibliography}
\end{document}